\newtheorem{theorem}{Theorem}[section]
\newtheorem{proposition}[theorem]{Proposition}
\theoremstyle{definition}
\theoremstyle{remark}
\newtheorem{remark}[theorem]{Remark}
\numberwithin{equation}{section}
\begin{document}
\title{The heat kernel of a weighted Maass-Laplacian with real weights}
\author{Zenan \v{S}abanac}
\address{Department of Mathematics, University of Sarajevo, Zmaja od Bosne
35, 71 000 Sarajevo}
\email{zsabanac@pmf.unsa.ba}
\author{Lamija \v{S}\'{c}eta}
\address{School of Economics and Business, University of Sarajevo, Alije
Izetbegovi\'{c}a 1, 71 000 Sarajevo}
\email{lamija.sceta@efsa.unsa.ba}
\subjclass[2010]{35K08, 47B32, 11F72}
\keywords{heat kernel, finite volume Rimannian surface, Maass-Laplacian operator}

\begin{abstract}
We derive closed formula for the heat kernel $K_\mathbb{H}$ associated to Maass-Laplacian operator $D_k$ for any real $k$ and prove that heat kernel $K_\mathbb{H}$ is strictly monotone decreasing function. We also derive some important asymptotic formulae for the heat kernel $K_\mathbb{H}$.
\end{abstract}

\maketitle







\section{Introduction}

Let $\Gamma\subset PSL\left(2,\mathbb{R}\right)$ be a discontinuous subgroup
of the group of all real M\"{o}bius transformations acting on the upper
half-plane $\mathbb{H}$, and by $\hat{\Gamma}$ denote the group covering $%
\Gamma$ under the projection $SL\left(2,\mathbb{R}\right)\rightarrow
PSL\left(2,\mathbb{R}\right)$. We assume that $-I \in \hat{\Gamma}$, where $I$ denotes the identity element.

We assume that the fundamental domain of $\Gamma$ has finite hyperbolic area, in which case the Riemann surface $X_\Gamma$ identified with $\mathbb{H}\backslash \Gamma$ has a finite volume.

The spectral theory of the Laplace operator and Laplace-like operators (Maass Laplacians) on Riemannian surfaces is an important object of study with vast applications in theory of automorphic forms, string theory, scattering theory and differential geometry, to name a few.

The study of the determinant of the Laplacian acting on the space of twice continuously differentiable complex functions on a Riemann surface (actually, the study of its self-adjoint extension) is crucial in the Polyakov's string theory \cite{polyakov}. Spectral theory of the hyperbolic Laplacian
$$D_0:=-y^2\left(\frac{\partial ^2}{\partial x^2}+\frac{\partial^2}{\partial y^2}\right)$$
and the related Selberg zeta function is a crucial ingredient in the construction of the determinant of the Lax-Phillips scattering operator, introduced in \cite{LaxPhi} (see \cite{FJS20} and the references therein).

The analytic and Rademeister torsion on Riemannian manifolds is related to the spectral theory of the associated Laplacian, see \cite{RaySinger}, so the spectral theory of the Laplacian can be viewed as a tool to deduce geometric invariants.

When the manifold is compact, the study of the determinant of the Laplacian and the study of the analytic torsion and related invariants from the abovementioned references relies on the study of the spectral zeta function, defined as the Mellin transform of the corresponding heat kernel.

More recently, the fact that for any integral or half-integral $k$, the kernel of the operator $D_k - k(1-k)$ is isomorphic to the space of weight $2k$ cusp forms for $\Gamma$ (when both spaces are viewed as $\mathbb{C}-$vector spaces) was used in \cite{FJK16} and \cite{FJK19} to deduce effective sup-norm bounds on average for such cusp forms. Here $D_k$ denotes the weighted Maass-Laplacian, defined by
\begin{equation} \label{Dk defin}
D_k:=-y^2\left(\frac{\partial ^2}{\partial x^2}+\frac{\partial^2}{\partial y^2%
}\right)+2iky\frac{\partial}{\partial x}.
\end{equation}
The asymptotic behavior of the heat kernel of the Maass-Laplacian $D_k$ on the upper half-plane, evaluated for $k\in\frac{1}{2}\mathbb{Z}$ in a closed form by \cite{Oshima} played a crucial role in the results of \cite{FJK16} and \cite{FJK19}.

The operator $D_k$ acts on twice continuously differentiable functions $f:\mathbb{H}\to\mathbb{C}$ satisfying transformation behavior \eqref{transform of f} with respect to a certain multiplier system of weight $k$. Multiplier systems and weighted Maass-Laplacians can be defined for arbitrary real weights, not just for integral and half-integral weights, see the construction on pp. 335--337 of \cite{He83}. Moreover, there are examples of important multiplier systems with weight $k\notin\frac{1}{2}\mathbb{Z}$, constructed from generalized Dedekind sums associated to Fuchsian groups (see e.g. \cite{Bu, JO'SS, BJO'SS20}). Therefore, it is of interest to deduce a closed formula for the heat kernel associated to the weighted Maass-Laplacian $D_k$ on $\mathbb{H}$ for any real $k$ and establish its asymptotic behavior. Despite the importance of the heat kernel, this was not done in prior research papers, to the best of our knowledge.

In this paper we derive a closed formula for the heat kernel associated to $D_k$ for an arbitrary weight $k\neq 0$ (the case when $k=0$ is well-known) and prove that it is strictly monotone decreasing when viewed as a function of the hyperbolic distance between the points in $\mathbb{H}$. We further investigate asymptotic behavior of the heat kernel as $t\downarrow 0$ and $t \to \infty$. We prove that the heat kernel decays exponentially as $t\downarrow 0$ and that the rescaled kernel is integrable in $t$ on $[1,\infty)$. This result is important in various applications, e.g. one may use results of \cite{JLa03} and obtain a closed formula for a branched meromorphic continuation of the Poisson kernel associated to $D_k$. Also, one may follow the pattern described in \cite{Mull83} to deduce meromorphic continuation of the spectral zeta function. Since the heat kernel is a fundamental object in various mathematical disciplines, we are certain that a closed formula derived in this paper for all real weights $k$, together with description of its behavior both as a function of the hyperbolic distance and as a function of time $t$, will have many applications.

\section{Preliminaries}

Let $\mathbb{H}=\{z\vert \ z=x+iy, y>0\}$ be the upper half-plane with
Poincare metric $dz=\frac{\vert dz \vert}{y}$ and areal measure $d\mu(z)=\frac{dxdy}{y^2}$ expressed in the rectangular coordinates. By $r(z,w)$ we denote the hyperbolic distance between points $z,w\in\mathbb{H}$.

For $S=\left(%
\begin{array}{cc}
a_S & b_S \\
c_S & d_S%
\end{array}%
\right)\in SL(2,\mathbb{R})$ and $z\in\mathbb{H}$ let $j(S,z):=c_Sz+d_S$. Then, for any two matrices $S,T\in SL(2,\mathbb{R})$, there exist a unique number $w(S,T)\in\{-1,0,1\}$, the \emph{phase factor} such that
$$
2\pi i w(S,T)=- \log j(ST,z)+\log j(S,Tz) + \log j(T,z),
$$
where logarithmic function on the right hand side is the principal branch normalized so that the argument takes values in $(-\pi,\pi]$, see e.g. \cite[Section 2.2]{JO'SS}.

A (scalar-valued) \emph{multiplier system} $\nu_k$ of a real weight $k$ on $\hat{\Gamma}$ is a function $\nu_k:\hat{\Gamma} \to S^1$, where $S^1$ is a unit circle in $\mathbb{C}$ with the properties:
\begin{enumerate}
\item[i)] $\nu_k(-I)=\exp(-2\pi i k)$,
\item[ii)] $\nu_k(ST)=\sigma_{2k}(S,T)\nu_k(S)\nu_k(T)$, for $S,T\in\hat{\Gamma}$,
\end{enumerate}
where $\sigma_{2k}(S;T):=\exp(4\pi i k w(S,T))$ is the \emph{factor system} of weight $2k$, see \cite[Definition 1.3.1]{Fi87}.

We denote by $\mathfrak{F}_k$ the space of all functions $f:\mathbb{H}\rightarrow \mathbb{C}$, which transform as
\begin{equation} \label{transform of f}
f\left(S z\right)=\nu_k\left(S \right)\left(\frac{c z+d}{c \overline{z}%
+d}\right)^kf\left(z\right),
\end{equation}
for all $S =\left(%
\begin{array}{cc}
a & b \\
c & d%
\end{array}%
\right)\in\hat{\Gamma}$, where $\nu_k$ is the multiplier system of weight $k$ on $\hat{\Gamma}$.

We denote by $\mathfrak{H}_k$ the Hilbert space of all functions $f\in\mathfrak{F}_k$ that are square-integrable on the fundamental domain of $\Gamma$, with respect to the hyperbolic measure $d\mu(z)$.

The weighted Maass-Laplacian $D_k$, defined by \eqref{Dk defin} preserves the transformation behavior \eqref{transform of f} for all twice continuously differentiable functions from $\mathfrak{F}_k$ and acts on a dense subspace of $\mathfrak{H}_k$. It is proved in \cite{roelcke1} that $D_k$ extends to a self-adjoint operator on $\mathfrak{H}_k$.

Let us note here that weighted Maass-Laplacian $D_k$ is a specialization of the operator
$$
\Delta_{\alpha,\beta}= -y^2\left( \frac{\partial^2}{\partial x^2} + \frac{\partial^2}{\partial y^2} \right) + (\alpha - \beta)iy\frac{\partial}{\partial x} - (\alpha +\beta)y\frac{\partial}{\partial y}
$$
at $\alpha+\beta=0$, first studied by Maass in \cite{M52}.

\emph{The heat kernel} of the Maass-Laplacian $D_k$ on the upper half-plane, denoted by $K_{\mathbb{H},k}\left( t; z,w\right)$, where $t>0$ is real and $z,w\in\mathbb{H}$ is a unique fundamental solution to the differential operator $D_k+\partial_t$, satisfying a Dirac condition as $t\downarrow 0$:
$$
f(z)=\lim_{t\downarrow 0}\int_{\mathbb{H},k}K_{\mathbb{H}}(t;z,w)d\mu(w),
$$
for any bounded and continuous function $f$ on $\mathbb{H}$, where the convergence is uniform on compact subsets of $\mathbb{H}$ (see \cite{JLa03}).

\section{Closed formula for the heat kernel associated to Maass-Laplacian operator $D_{k}$}

In his seminal paper \cite{Fay}, Fay initiated the study of the heat kernel of $D_k$ on $\mathbb{H}$, however,
D'Hocker and Phong \cite{Phong} established that this formula is not completely correct. In case when $k\in\frac{1}{2}\mathbb{Z}$, the correct expression for the heat kernel $K_{\mathbb{H},k}\left( t; z,w\right)$ of the Maass-Laplacian $\Delta_k$ on the upper half-plane was deduced by Oshima in \cite{Oshima}.

It is well known fact that the heat kernel is a radial function, i.e. it depends only upon the hyperbolic distance $r=r(z,w)$ between points $z$ and $w$ in $\mathbb{H}$. Moreover, the weight $k\in \mathbb{R}$ is arbitrary but fixed throughout the paper. Therefore, to ease the notation, we will denote the heat kernel $K_{\mathbb{H},k}\left( t; z,w\right)$ simply by $K_{\mathbb{H}}\left( t; r\right)$.

In this section we extend Oshima's result to all real values of $k$ and prove the following theorem.

\begin{theorem}
The heat kernel for the Maass-Laplacian $D_{k}$, for every $k\in \mathbb{R}$,
is given by
\begin{equation}\label{heat k main formula}
K_{\mathbb{H}}\left( t,r\right) =\frac{e^{-\frac{t}{4}}}{\left( 4\pi
t\right) ^{\frac{3}{2}}}\int\limits_{r}^{\infty }\frac{ue^{-\frac{u^{2}}{4t}}}{%
\sqrt{2\cosh u-2\cosh r}}\cdot \mathcal{T}_{2k}\left( \frac{\cosh \frac{u}{2}}{\cosh
\frac{r}{2}}\right) du,
\end{equation}
where
\begin{equation*}
\mathcal{T}_{2k}\left( x\right) =\left( x+\sqrt{x^{2}-1}\right) ^{2k}+\left( x-\sqrt{%
x^{2}-1}\right) ^{2k}.
\end{equation*}
\end{theorem}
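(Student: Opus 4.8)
The plan is to build the heat kernel for $D_k$ on $\mathbb{H}$ from the known heat kernel on $\mathbb{H}$ for the untwisted Laplacian $D_0$ together with the explicit point-pair invariant structure forced by the transformation behavior \eqref{transform of f}. First I would recall that since the heat kernel is radial, it suffices to understand $K_{\mathbb{H}}(t;z,w)$ as a function of $r = r(z,w)$, and that the weighted heat kernel must carry the automorphy factor: writing $z,w\in\mathbb{H}$, one has
\[
K_{\mathbb{H},k}(t;Sz,Sw) = \left(\frac{j(S,z)}{\overline{j(S,z)}}\right)^{k}\left(\frac{\overline{j(S,w)}}{j(S,w)}\right)^{k} K_{\mathbb{H},k}(t;z,w)
\]
for $S\in PSL(2,\mathbb{R})$. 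The first step is therefore to normalize: move one point, say $w$, to $i$ by an isometry, and reduce to computing $K_{\mathbb{H},k}(t;z,i)$ as a function of $z$, where the automorphy factor becomes an explicit phase depending on $z$. The key classical input I would invoke is the $k=0$ heat kernel of McKean,
\[
K_{\mathbb{H}}^{(0)}(t;r) = \frac{\sqrt{2}\,e^{-t/4}}{(4\pi t)^{3/2}}\int_{r}^{\infty}\frac{u\,e^{-u^{2}/(4t)}}{\sqrt{\cosh u - \cosh r}}\,du,
\]
which already has exactly the Gaussian-times-$e^{-t/4}$ prefactor and the $\int_r^\infty$ structure appearing in \eqref{heat k main formula}; the new ingredient for general $k$ is precisely the Legendre-type factor $\mathcal{T}_{2k}(\cosh\frac{u}{2}/\cosh\frac{r}{2})$.

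The second step is to identify where $\mathcal{T}_{2k}$ comes from. Since $\mathcal{T}_{2k}(x) = 2\cosh(2k\operatorname{arccosh}x)$, it is the unique (up to normalization) solution of a second-order ODE — essentially a Legendre/Gegenbauer equation — and one recognizes it as the "spherical function" attached to the weight-$k$ representation. Concretely, I expect the derivation to run through the spherical-function/ODE route used by Oshima for half-integral $k$: one writes the weighted heat kernel in geodesic polar coordinates, uses the fact that $D_k$ acting on a function of the form (phase)$\times$(radial) reduces, after separating the explicit phase, to a one-dimensional radial operator that is a deformation of the $k=0$ radial Laplacian by a potential/first-order term governed by $k$. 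Solving the corresponding heat equation in this radial variable, and matching the $t\downarrow 0$ Dirac condition stated in the Preliminaries, pins down the kernel. The cleanest way to organize this is to verify directly that the function on the right-hand side of \eqref{heat k main formula} (i) satisfies $(D_k + \partial_t)K_{\mathbb{H}} = 0$ with respect to the $z$-variable after reinstating the automorphy phase, and (ii) satisfies the Dirac initial condition, and then appeal to uniqueness of the fundamental solution; this converts the "derivation" into a verification, which sidesteps guessing the ansatz.

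For the verification of (i), the concrete computation is: apply $D_k$ in geodesic polar coordinates around $w$, where $D_k$ separates as a radial part plus an angular part, note that for the radial heat kernel the angular part contributes a specific scalar (this is where the weight $k$ enters, through the phase's behavior under rotation about $w$), and then reduce to checking a one-variable identity. Substituting $v = \cosh u$, $\rho = \cosh r$ and using the ODE satisfied by $\mathcal{T}_{2k}$ in its argument, the integrand's $r$-derivatives should collapse — after an integration by parts in $u$ to handle the square-root singularity at $u=r$ — to exactly $-\partial_t$ of the integrand, using the heat-equation identity $\partial_t\big(t^{-3/2}e^{-u^2/(4t)}\big)$ familiar from the $k=0$ case. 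For (ii), as $t\downarrow 0$ the Gaussian $e^{-u^2/(4t)}$ concentrates at $u=r=0$, where $\mathcal{T}_{2k}(\cosh\frac{u}{2}/\cosh\frac{r}{2}) \to \mathcal{T}_{2k}(1) = 2$, so the kernel reduces to the $k=0$ short-time behavior, which is known to be a Dirac mass; one must also check the phase tends to $1$, which it does since $z\to w$.

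The main obstacle I anticipate is the first step of the computation in (i): correctly carrying out the separation of $D_k$ in geodesic polar coordinates centered at a general point $w$, keeping precise track of how the automorphy factor $\big(\tfrac{cz+d}{c\bar z+d}\big)^k$ interacts with the angular derivative, so that the angular part of $D_k$ acting on the phase produces exactly the ODE whose solution is $\mathcal{T}_{2k}$ — the bookkeeping of this phase under the non-commuting actions of rotations about $w$ is where errors crept into earlier formulas (as the excerpt notes for Fay's original attempt), so this step must be done with care. The integration by parts in $u$ near the endpoint singularity $\sqrt{2\cosh u - 2\cosh r}$ is a secondary technical point, but routine once the radial ODE identity for $\mathcal{T}_{2k}$ is in hand; likewise the uniqueness appeal is standard given the statement in the Preliminaries.
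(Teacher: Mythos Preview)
Your approach is plausible but takes a genuinely different route from the paper. The paper does not verify the formula against the PDE and appeal to uniqueness; it \emph{derives} it from the spectral side. Starting from Oshima's inverse-transform identity
\[
g(r)=-\frac{1}{\pi i}\int_{\mathrm{Re}\,s=\alpha>|k|}h(s)\Bigl(s-\tfrac12\Bigr)Q_{s,k}(\cosh r)\,ds
\]
together with Fay's integral representation of $Q_{s,k}$ (both of which are valid for all real $k$, not just $k\in\tfrac12\mathbb{Z}$), the paper specializes to $h(s)=e^{s(s-1)t}$, interchanges the $s$- and $u$-integrals by Fubini--Tonelli, shifts the contour to $\mathrm{Re}\,s=\tfrac12$, and evaluates the resulting $s$-integral as an explicit Gaussian to obtain \eqref{heat k main formula} with Fay's factor $I_k(u,r)$ in place of $\mathcal{T}_{2k}$. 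The remainder of the proof is a purely algebraic simplification, using the defining relation for Fay's parameter $\Theta$ and elementary hyperbolic identities, showing that $I_k(u,r)=\mathcal{T}_{2k}\bigl(\cosh\tfrac{u}{2}/\cosh\tfrac{r}{2}\bigr)$.

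Your verification strategy would be self-contained and avoids quoting Fay and Oshima, but it pushes all the difficulty into the step you yourself flag as the obstacle: carrying $D_k$ through the automorphy phase in geodesic polar coordinates and reducing to the Chebyshev-type ODE for $\mathcal{T}_{2k}$. You do not actually perform that computation, and as you note, this is exactly where Fay's original formula went wrong; so as it stands the proposal is an outline whose crucial step is unexecuted. The paper's route trades that PDE computation for two cited formulas plus elementary algebra, and has the added benefit that the expression for $K_{\mathbb{H}}$ \emph{emerges} from the argument rather than having to be known in advance and checked.
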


\begin{proof}
Our starting point is \cite[formula (2.14)]{Oshima}
\begin{equation}\label{Osh f-la}
g(r)= -\frac{1}{\pi i}\int\limits_{\mathrm{Re}s=\alpha >|k|}h(s)\left(s-\frac{1}{2}\right)Q_{s,k}(\cosh r) ds,
\end{equation}
where $h(s)$ is a test function satisfying certain conditions and
\begin{equation*}
Q_{s,k}\left(\cosh r\right)= -\frac{1}{4\pi}\int\limits_r^{+\infty}e^{-\left(s-\frac{1}{2}\right)u}\frac{I_k(u,r)}{\sqrt{2\cosh u-2\cosh r}}du
\end{equation*}
is given by \cite[formula (21)]{Fay} with $n=0$. Here
\begin{equation}\label{bracket}
\begin{split}
I_{k}\left( u,r\right) = \left( \cosh r-1\right) ^{-k} \cdot \left[
e^{2k\Theta }\left( \sqrt{2}\sinh \frac{u}{2}-\sqrt{\cosh u-\cosh r}\right)
^{2k}\right.   \\
 + \left. e^{-2k\Theta }\left( \sqrt{2}\sinh \frac{u}{2}+\sqrt{\cosh u-\cosh r}%
\right) ^{2k}\right],
\end{split}
\end{equation}
where $\Theta$ satisfies the relation
\begin{equation} \label{theta rel}
e^{\pm \Theta }\sinh r=e^{u}-\cosh r\pm e^{\frac{u}{2}}\sqrt{2\cosh u-2\cosh r},
\end{equation}
see \cite[p. 157]{Fay}.

\noindent Formula \eqref{Osh f-la} is derived from the spectral decomposition theorem (\cite[Theorem 1.5]{Fay}) and holds true for any $k\in\mathbb{R}$. It is known that the heat kernel $K_{\mathbb{H}}\left(t,r\right)$ is deduced by taking $h\left(s\right)=e^{s\left(s-1\right)t}$ in \eqref{Osh f-la} and integrating along the vertical line $ \mathrm{Re}s=\alpha$, for any $\alpha >|k|$. Therefore, application of Fubini-Tonelli theorem in \eqref{Osh f-la} yields
\begin{equation} \label{K pre-formula}
K_{\mathbb{H}}\left(t,r\right)=\frac{1}{4\pi^2 i}\int\limits_r^{+\infty}H(u,t)\frac{I_k(u,r)}{\sqrt{2\cosh u-2\cosh r}}du
\end{equation}
where
\begin{equation*}
H(u,t)=\int\limits_{\mathrm{Re}s=\alpha >|k|}e^{s(s-1)t-\left(s-\frac{1}{2}\right)u}\left(s-\frac{1}{2}\right)ds.
\end{equation*}
Moving the line of integration in the above integral to the line $\mathrm{Re}s=1/2$, which is justified due to holomorphicity of the function under the integral sign and setting $s=1/2+iy$ we get
\begin{equation*}
H(u,t)=-e^{-\frac{t}{4}}\int\limits_{-\infty}^{+\infty}e^{-ty^2-iuy}ydy.
\end{equation*}
Applying \cite[formula 3.462.6.]{GR07} with $p=t>0$ and $q=-iu/2$ we deduce that
\begin{equation*}
H(u,t)=\frac{iu}{2t}\sqrt{\frac{\pi}{t}}e^{-\frac{t}{4}}e^{-\frac{u^2}{4t}}.
\end{equation*}
Inserting this into \eqref{K pre-formula} we get
\begin{equation}  \label{K}
K_{\mathbb{H}}\left(t,r\right)=\frac{e^{-\frac{t}{4}}}{\left(4\pi t\right)^{%
\frac{3}{2}}}\int_{r}^{\infty}\frac{ue^{-%
\frac{u^2}{4t}}I_k(u,r)}{\sqrt{2\cosh u -2\cosh r}} du.
\end{equation}
To complete the proof of the theorem it remains to prove that $I_k\left(u,r\right)=\mathcal{T}_{2k}\left( \frac{\cosh \frac{u}{2}}{\cosh\frac{r}{2}}\right)$.

\noindent Using \eqref{bracket}, \eqref{theta rel} and formula $\sinh ^{2}r=\left( \cosh r-1\right) \left( \cosh r+1\right)$ we get
\begin{equation}  \label{bracket2}
\begin{gathered}
I_k\left(u,r\right)=\frac{1}{\left(\cosh r-1\right)^{2k}\left(\cosh
r+1\right)^{k} }\cdot \\
\left[\left(e^u-\cosh r+e^{\frac{u}{2}}\sqrt{2\cosh u-2\cosh r}%
\right)^{2k}\left(\sqrt{2}\sinh \frac{u}{2}-\sqrt{\cosh u -\cosh r}%
\right)^{2k} \right. \\
\left.+\left(e^u-\cosh r-e^{\frac{u}{2}}\sqrt{2\cosh u-2\cosh r}%
\right)^{2k}\left(\sqrt{2}\sinh \frac{u}{2}+\sqrt{\cosh u -\cosh r}%
\right)^{2k}\right].
\end{gathered}
\end{equation}
Elementary transformations based on the properties of functions $\sinh$ and $\cosh$ yield that the first summand in the square bracket \eqref{bracket2} is
\begin{equation}\label{summand1}
\begin{gathered}
\left[\left(e^u-\cosh r+e^{\frac{u}{2}}\sqrt{2\cosh u-2\cosh r}
\right)\cdot\left(\sqrt{2}\sinh \frac{u}{2}-\sqrt{\cosh u -\cosh r}\right)
\right]^{2k}
\\=\left(\cosh r-1\right)^{2k}\left(\sqrt{2}\cosh\frac{u}{2}+\sqrt{\cosh
u-\cosh r}\right)^{2k}.
\end{gathered}
\end{equation}
Analogously, the second summand in the square bracket of \eqref{bracket2} is
\begin{equation}\label{summand2}
\begin{gathered}
\left[\left(e^u-\cosh r-e^{\frac{u}{2}}\sqrt{2\cosh u-2\cosh r}%
\right)\cdot\left(\sqrt{2}\sinh \frac{u}{2}+\sqrt{\cosh u -\cosh r}\right)%
\right]^{2k}
\\=\left(\cosh r-1\right)^{2k}\left(\sqrt{2}\cosh\frac{u}{2}-\sqrt{\cosh
u-\cosh r}\right)^{2k}.
\end{gathered}
\end{equation}
Inserting \eqref{summand1} and \eqref{summand2} into %
\eqref{bracket2}, we get
\begin{equation*}
\begin{split}
I_{k}\left( u,r\right) =\frac{1}{\left( \cosh r+1\right) ^{k}} \cdot \left[ \left( \sqrt{2}\cosh \frac{u}{2}+\sqrt{%
\cosh u-\cosh r}\right) ^{2k}\right. \\
\left. +\left( \sqrt{2}\cosh \frac{u}{2}-\sqrt{\cosh u-\cosh r}\right) ^{2k}%
\right].
\end{split}
\end{equation*}
Using that $\cosh r+1=2\cosh ^{2}(r/2)$ we easily deduce the equality
\begin{equation*}
I_{k}\left( u,r\right) = \mathcal{T}_{2k}\left( \frac{\cosh \frac{u}{2}}{\cosh\frac{r}{2}}\right).
\end{equation*}
The proof is complete.
\end{proof}

\begin{remark}
When $k\in\frac{1}{2}\mathbb{Z}$, the function $\mathcal{T}_{2k}(x)$ equals twice the $2k$th Chebyshev polynomial, hence our result indeed generalizes the main result of \cite{Oshima}.
\end{remark}

\section{Properties of the heat kernel}

In this section we investigate properties of the heat kernel as a function of the hyperbolic distance $r$ and as a function of the time $t>0$.

\subsection{Behavior of $K_{\mathbb{H}}\left(t,r\right)$ as a function of $r$}

Monotonicity of the heat kernel for the Maass-Laplacian, as a function of the hyperbolic distance $r$ is proven by Friedman at al. in \cite[Proposition 3.2]{FJK16}, assuming that $k\in \frac{1}{2}\mathbb{Z}$.  The following proposition generalizes that result to the case of all real weights $k$.

\begin{proposition}
For any $t>0$, the heat kernel $K_{\mathbb{H}}\left(t,r\right)$ on $\mathbb{H%
}$ associated to $D_k$, $k\in\mathbb{R}$ is strictly monotone decreasing as a function of
$r>0$.
\end{proposition}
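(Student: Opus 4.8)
The plan is to differentiate the closed formula \eqref{heat k main formula} with respect to $r$ and show the derivative is strictly negative for all $r>0$. The integrand has two $r$-dependent pieces — the kernel factor $u\,e^{-u^2/4t}/\sqrt{2\cosh u - 2\cosh r}$, which is increasing in $r$ (bad sign), and the factor $\mathcal{T}_{2k}(\cosh\tfrac{u}{2}/\cosh\tfrac{r}{2})$ — plus the lower endpoint $u=r$ of integration, which is a genuine singularity of the integrand. So the first thing I would do is rewrite the integral by the substitution $u = r + v$ or, better, the classical substitution that removes the square-root singularity: set $2\cosh u - 2\cosh r = w^2$ (equivalently $w = 2\sqrt{\sinh\frac{u+r}{2}\sinh\frac{u-r}{2}}$), which turns \eqref{heat k main formula} into an integral over $w\in[0,\infty)$ with no boundary term depending on $r$ and a smooth integrand, so that differentiation under the integral sign is legitimate.

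With the singularity cleared, the second step is to produce a manifestly negative expression for $\partial_r K_{\mathbb{H}}$. I expect it will be cleanest to first dispose of the case $k\in\frac12\mathbb{Z}$-style positivity: note $\mathcal{T}_{2k}(x) = 2\cosh(2k\,\mathrm{arccosh}\,x)$ for $x\ge 1$, so for fixed $u\ge r$ the map $r\mapsto \mathcal{T}_{2k}(\cosh\tfrac u2/\cosh\tfrac r2)$ is decreasing in $r$ (since $\cosh\tfrac u2/\cosh\tfrac r2$ decreases and $\mathcal{T}_{2k}$ is increasing on $[1,\infty)$ for every real $k$, because $\cosh$ is even and increasing on $[0,\infty)$ and $\mathrm{arccosh}$ is increasing). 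That handles the $\mathcal{T}_{2k}$ factor. The remaining obstacle is the kernel factor, which increases in $r$; the standard trick — the one used in \cite{FJK16} for half-integral $k$, which I would adapt — is to integrate by parts in $u$ so that the $r$-derivative falls only on quantities with a controlled sign, or to compare $K_{\mathbb{H}}(t,r)$ for two values $r_1<r_2$ directly by the change of variables that matches the two singular endpoints and shows the integrand is pointwise strictly smaller for $r_2$. Concretely, after the substitution above both contributions (the explicit kernel and $\mathcal{T}_{2k}$) should combine into a single integral of the form $-\int_0^\infty (\text{positive}) \, dw$, with strict positivity of the integrand on a set of positive measure, giving strict monotonicity.

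The main technical obstacle, then, is controlling the competition between the increasing kernel factor and the decreasing $\mathcal{T}_{2k}$ factor: one cannot argue termwise that the whole integrand decreases in $r$. I would resolve this by following the substitution strategy: after writing $K_{\mathbb{H}}(t,r) = \frac{e^{-t/4}}{(4\pi t)^{3/2}}\int_0^\infty F(w,r)\,dw$ with $F$ smooth in $r$, compute $\partial_r F$ and show $\int_0^\infty \partial_r F(w,r)\,dw < 0$ by recognizing the $u$-derivative structure — i.e. that the "bad" term is a total $w$- (or $u$-) derivative of something that vanishes at the endpoints, so it contributes a boundary term that either vanishes or has the right sign, leaving only the negative contribution from $\mathcal{T}_{2k}$. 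An alternative, perhaps cleaner route: express the heat kernel via the known $k=0$ McKean-type kernel convolved against the weight, and use that $D_k$ and $D_{-k}$ share a heat kernel (the formula only depends on $2k$ through the even function $\mathcal{T}_{2k}$) to reduce to $k\ge 0$; then monotonicity for $k=0$ is classical and the weight factor only strengthens the decay. Either way, the decisive lemma is: for each fixed $t>0$ and each pair $0<r_1<r_2$, after the singularity-removing substitution the integrand at $r_2$ is strictly dominated by that at $r_1$ pointwise in the new variable, with strict inequality on a set of positive measure. Establishing that pointwise domination — which amounts to a one-variable inequality involving $\cosh$, $\sinh$, and $\mathcal{T}_{2k}$ — is the crux, and I expect it to follow from monotonicity of $\mathrm{arccosh}$ together with the elementary bound $\sinh\frac{u-r_2}{2}\sinh\frac{u+r_2}{2} \le \sinh\frac{u-r_1}{2}\sinh\frac{u+r_1}{2}$ after reparametrizing so the square-root arguments agree.
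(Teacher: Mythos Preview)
Your substitution $w^2=2(\cosh u-\cosh r)$ is the right move, and the pointwise domination you identify as the ``decisive lemma'' at the end drops out immediately --- the detours through integration by parts, total-derivative structure, and reduction to $k=0$ are all unnecessary. After the substitution, $du=w\,dw/\sinh u$ and
\[
K_{\mathbb{H}}(t,r)=\frac{e^{-t/4}}{(4\pi t)^{3/2}}\int_0^\infty \frac{u\,e^{-u^2/(4t)}}{\sinh u}\,\mathcal{T}_{2k}\!\left(\frac{\cosh\frac{u}{2}}{\cosh\frac{r}{2}}\right)dw,
\]
where $u=u(w,r)$ satisfies $\cosh u=\cosh r+w^2/2$. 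For fixed $w>0$, $u$ is strictly increasing in $r$, and $u\mapsto u\,e^{-u^2/(4t)}/\sinh u$ is strictly decreasing on $(0,\infty)$ (since $\tanh u<u$), so the first factor strictly decreases in $r$. For the second, $\cosh u=2\cosh^2(u/2)-1$ gives $\cosh(u/2)/\cosh(r/2)=\sqrt{1+w^2/(4\cosh^2(r/2))}$, strictly decreasing in $r$; combined with the monotonicity of $\mathcal{T}_{2k}$ on $[1,\infty)$ you already observed, the second factor is nonincreasing (strictly decreasing when $k\neq 0$). Hence the integrand is strictly decreasing in $r$ pointwise in $w>0$, and differentiation under the integral sign (justified by the Gaussian decay) gives $\partial_r K_{\mathbb{H}}<0$ directly. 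There is no residual ``bad term'' after the substitution; your hedging on this point is the only real gap.

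The paper takes a different route: it writes $\sinh u/\sqrt{\cosh u-\cosh r}=2\,\partial_u\sqrt{\cosh u-\cosh r}$, integrates by parts in $u$ to put the square root in the numerator, and only then differentiates in $r$. This produces the combination $\sinh u\,\partial_r F_k+\sinh r\,\partial_u F_k$ under the integral, with $F_k(t,r,u)=\frac{u e^{-u^2/(4t)}}{\sinh u}\mathcal{T}_{2k}\bigl(\cosh\frac{u}{2}/\cosh\frac{r}{2}\bigr)$, and that combination is shown to be negative by explicit computation of both partials. Your substitution route, once streamlined as above, is more elementary; the paper's approach follows \cite{FJK16} line for line with the Chebyshev polynomial replaced by $\mathcal{T}_{2k}$, which has the virtue of isolating exactly where the real-$k$ generalization enters (only in checking $\mathcal{T}_{2k}'(x)>0$ for $x>1$).
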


\begin{proof}
The proof follows the lines of the proof of \cite[Proposition 3.2]{FJK16}, so we will focus on parts that use the function $\mathcal{T}_{2k}$ instead of the Chebyshev polynomial $T_{2k}$. We start with by writing the heat kernel $K_{\mathbb{H}}\left(t,r\right) $ as
\begin{equation*}
K_{\mathbb{H}}\left( t,r\right) =\frac{e^{-\frac{t}{4}}}{\sqrt{2}\left( 4\pi
t\right) ^{\frac{3}{2}}}\int_{r}^{\infty }\frac{\sinh u}{\sqrt{\cosh u-\cosh
r}}F_{k}\left( t,r,u\right) du,
\end{equation*}%
where
\begin{equation*}
F_{k}\left( t,r,u\right) =\frac{ue^{-\frac{u^{2}}{4t}}}{\sinh u}\cdot
\mathcal{T}_{2k}\left( \frac{\cosh \frac{u}{2}}{\cosh \frac{r}{2}}\right) .
\end{equation*}
Integration by parts yields:
\begin{equation*}
K_{\mathbb{H}}\left( t,r\right) =-\frac{\sqrt{2}e^{-\frac{t}{4}}}{\left(
4\pi t\right) ^{\frac{3}{2}}}\int_{r}^{\infty }\frac{\partial }{\partial u}%
F_{k}\left( t,r,u\right) \sqrt{{\cosh u-\cosh r}}du.
\end{equation*}%
In order to prove monotonicity of $K_{\mathbb{H}}\left( t,r\right) $ we need to prove that $\frac{\partial}{\partial r}K_{\mathbb{H}}\left( t,r\right)<0 $ for all $r>0$. Proceeding analogously as in \cite{FJK16}, we get:
\begin{equation}\label{delta K}
\begin{split}
\frac{\partial }{\partial r}K_{\mathbb{H}}\left( t,r\right) =\frac{e^{-\frac{%
t}{4}}}{\sqrt{2}\left( 4\pi t\right) ^{\frac{3}{2}}}\int_{r}^{\infty }\left(
\sinh u\frac{\partial }{\partial r}F_{k}\left( t,r,u\right) +\sinh r\frac{%
\partial }{\partial u}F_{k}\left( t,r,u\right) \right)
\\ \cdot \frac{du}{\sqrt{{%
\cosh u-\cosh r}}}.
\end{split}
\end{equation}
Therefore, to prove the proposition it suffices to prove the inequality
\begin{equation}\label{main ineq}
\sinh u\frac{\partial }{\partial r}F_{k}\left( t,r,u\right) +\sinh r\frac{%
\partial }{\partial u}F_{k}\left( t,r,u\right) <0,\quad \text{for all  } r>0.
\end{equation}
It is trivial to deduce that
\begin{equation}\label{bracket 4}
\begin{split}
\sinh u\frac{\partial }{\partial r}F_{k}\left( t,r,u\right) +\sinh r\frac{%
\partial }{\partial u}F_{k}\left( t,r,u\right) =\sinh r\left( \frac{1}{u}-%
\frac{u}{2t}-\frac{\cosh u}{\sinh u}\right) \cdot F_{k}\left( t,r,u\right)
 \\
+\mathcal{T}_{2k}^{\prime }\left( x\right) \cdot \frac{ue^{-\frac{u^{2}}{4t}}\sinh
\frac{r}{2}}{2\cosh \frac{u}{2}\cosh ^{2}\frac{r}{2}}\left( \cosh ^{2}\frac{r%
}{2}-\cosh ^{2}\frac{u}{2}\right),
\end{split}
\end{equation}
where
\begin{equation} \label{T prime}
\mathcal{T}_{2k}^{\prime }\left( x\right) =\frac{2k}{\sqrt{x^{2}-1}}\left( \left( x+%
\sqrt{x^{2}-1}\right) ^{2k}-\left( x-\sqrt{x^{2}-1}\right) ^{2k}\right)
\end{equation}
and $x=\frac{\cosh \frac{u}{2}}{\cosh \frac{r}{2}}$.

\noindent For $u>0$ it holds $\tanh u\leq u$, hence $\frac{\cosh u}{\sinh u}=\coth
u\geq \frac{1}{u}$. Therefore,
\begin{equation*}
\frac{1}{u}-\frac{u}{2t}-\frac{\cosh u}{%
\sinh u}\leq -\frac{u}{2t}<0.
\end{equation*}
It is easy to check that $\mathcal{T}_{2k}^{\prime
}(x)>0$ for all $x=\frac{\cosh \frac{u}{2}}{\cosh \frac{r}{2}}>1$. Namely, when $k>0$ both factors on the right-hand side of \eqref{T prime} are positive, while, for $k<0$ both factors are negative.

\noindent Trivially, $\cosh ^{2}\frac{r}{2}-\cosh ^{2}\frac{u}{2}<0
$ for $u>r>0$, since $\cosh ^{2}x$ is increasing function for $x>0$. This proves that each term in the sum on the right-hand side of \eqref{bracket 4} is negative, hence \eqref{main ineq} holds true.

\noindent The proof is complete.
\end{proof}

\subsection{Behavior of $K_{\mathbb{H}}\left(t,r\right)$ as a function of $t$}

The small-time and large-time asymptotic behavior of the heat kernel is essential for its applications in physics, since the meromorphic continuation of the spectral zeta function associated to eigenvalues of the Maass-Laplacian relies on this information, see e.g. \cite{Mull83}.

The following proposition describes the asymptotic behavior of $K_{\mathbb{H}}\left( t,r\right)$ as $t\downarrow 0$ and $t\to \infty$.
\begin{proposition}
The heat kernel $K_{\mathbb{H}}\left(t,r\right)$ on $\mathbb{H%
}$ associated to $D_k$, $k\in\mathbb{R}$, for all $r>0$ satisfies the following two relations:
\begin{equation}\label{small t}
K_{\mathbb{H}}\left( t,r\right)= O\left(t^{-\frac{3}{2}}e^{-\frac{r^2}{4t}} \right), \quad \text{as  } t\downarrow 0,
\end{equation}
where the implied constant is independent of $t$, and
\begin{equation}\label{large t}
K_{\mathbb{H}}\left( t,r\right)= O\left(e^{-\frac{t}{4}}t^{-\frac{3}{2}} \right), \quad \text{as  } t \to \infty,
\end{equation}
where the implied constant is independent of $t$.
\end{proposition}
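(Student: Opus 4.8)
The plan is to read off both estimates directly from the closed formula \eqref{heat k main formula}. Since $\mathcal{T}_{2k}(x)\geq 0$ for $x\geq 1$ and $\cosh(u/2)/\cosh(r/2)\geq 1$ throughout the range of integration, the integrand in \eqref{heat k main formula} is nonnegative, so $K_{\mathbb{H}}(t,r)>0$ and it suffices to bound its integral from above. Put
\begin{equation*}
K_{\mathbb{H}}(t,r)=\frac{e^{-t/4}}{(4\pi t)^{3/2}}\,J(t,r),\qquad J(t,r):=\int_{r}^{\infty}\frac{u\,e^{-u^{2}/(4t)}}{\sqrt{2\cosh u-2\cosh r}}\,\mathcal{T}_{2k}\!\left(\frac{\cosh(u/2)}{\cosh(r/2)}\right)du .
\end{equation*}
The prefactor $e^{-t/4}(4\pi t)^{-3/2}$ already carries the $t$-dependence asserted in \eqref{small t} and \eqref{large t}, so the whole matter reduces to proving that $J(t,r)=O\bigl(e^{-r^{2}/(4t)}\bigr)$ uniformly for $0<t\le 1$ and $J(t,r)=O(1)$ uniformly for $t\ge 1$, the implied constants being allowed to depend on $r$ and $k$.

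For \eqref{small t} I would factor $e^{-u^{2}/(4t)}=e^{-r^{2}/(4t)}e^{-(u^{2}-r^{2})/(4t)}$, pull the constant $e^{-r^{2}/(4t)}$ out of the integral, and use the bound $e^{-(u^{2}-r^{2})/(4t)}\le e^{-(u^{2}-r^{2})/4}$, which holds for $0<t\le 1$ and $u\ge r$. This reduces \eqref{small t} to the finiteness of the $t$-independent integral $\int_{r}^{\infty}u\,e^{-(u^{2}-r^{2})/4}\,(2\cosh u-2\cosh r)^{-1/2}\,\mathcal{T}_{2k}(\cosh(u/2)/\cosh(r/2))\,du$. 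Near $u=r$ one has $\mathcal{T}_{2k}(\cosh(u/2)/\cosh(r/2))\to\mathcal{T}_{2k}(1)=2$ and $2\cosh u-2\cosh r=4\sinh\tfrac{u+r}{2}\sinh\tfrac{u-r}{2}\sim 2\sinh r\,(u-r)$, so the integrand is $O\bigl((u-r)^{-1/2}\bigr)$ there and hence integrable; as $u\to\infty$ the Gaussian factor $e^{-u^{2}/4}$ dominates both the exponential decay of $(2\cosh u-2\cosh r)^{-1/2}$ and the exponential growth of $\mathcal{T}_{2k}$, where I would use the elementary inequality $\mathcal{T}_{2k}(x)\le 2\bigl(x+\sqrt{x^{2}-1}\bigr)^{2|k|}\le C_{k}\,x^{2|k|}$ valid for $x\ge 1$, together with $\cosh(u/2)/\cosh(r/2)\le e^{u/2}$. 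This settles \eqref{small t} (and works for every real $k$).

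For \eqref{large t} the natural idea is the cruder bound $e^{-u^{2}/(4t)}\le 1$, which gives $J(t,r)\le\int_{r}^{\infty}u\,(2\cosh u-2\cosh r)^{-1/2}\,\mathcal{T}_{2k}(\cosh(u/2)/\cosh(r/2))\,du$, a quantity independent of $t$; multiplying by $e^{-t/4}(4\pi t)^{-3/2}$ then yields \eqref{large t}, provided this integral is finite. Near $u=r$ the integrand is $O\bigl((u-r)^{-1/2}\bigr)$ exactly as before; as $u\to\infty$ one uses $\mathcal{T}_{2k}(x)\le C_{k}x^{2|k|}$ and $\cosh(u/2)/\cosh(r/2)\le e^{u/2}$ on one side, and $2\cosh u-2\cosh r\ge\tfrac12 e^{u}$ for all large $u$ on the other, so that the integrand is bounded by a constant multiple of $u\,e^{(|k|-1/2)u}$. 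I expect this last point to be the main obstacle: in the small-$t$ regime the Gaussian $e^{-u^{2}/4}$ survives to overwhelm the growth of $\mathcal{T}_{2k}$ at infinity, but here that factor has been discarded, so the convergence of the remaining integral --- hence the validity of \eqref{large t} through this route --- comes down to the exponential decay rate $\tfrac12$ of $(2\cosh u-2\cosh r)^{-1/2}$ beating the exponential growth rate $|k|$ of $\mathcal{T}_{2k}(\cosh(u/2)/\cosh(r/2))$, i.e. to the sign of $|k|-\tfrac12$. The remaining bookkeeping --- splitting the integral at, say, $u=2r$, treating the $(u-r)^{-1/2}$ singularity and the exponential tail separately, and checking that the implied constants depend only on $r$ and $k$ --- is routine.
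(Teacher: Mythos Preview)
Your treatment of the small-$t$ regime is correct and essentially identical to the paper's: both factor out $e^{-r^{2}/(4t)}$, split the $u$-integral into a piece near $u=r$ (where the $(u-r)^{-1/2}$ singularity is integrable and $\mathcal{T}_{2k}\to 2$) and a tail, and let the surviving Gaussian factor $e^{-(u^{2}-r^{2})/(4t)}$ --- bounded for small $t$ by $e^{-(u^{2}-r^{2})/4}$ or the like --- kill the exponential growth of $\mathcal{T}_{2k}$ at infinity. This part is complete.

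For large $t$ your proposal has exactly the gap you yourself flag, and it is not mere bookkeeping. Once you discard $e^{-u^{2}/(4t)}$ entirely, the tail of the majorant behaves like $\int^{\infty} u\,e^{(|k|-1/2)u}\,du$, which diverges for every $|k|\ge \tfrac12$; your route therefore establishes \eqref{large t} only when $|k|<\tfrac12$. The paper's remedy is precisely \emph{not} to throw away the Gaussian in the tail: it keeps $e^{-u^{2}/(4t)}$, bounds the tail by $e^{r^{2}/(4t)}\int_{0}^{\infty}u\,e^{-u^{2}/(4t)+(|k|-1/2)u}\,du$, evaluates this integral in closed form via the parabolic cylinder function $D_{-2}$ (Gradshteyn--Ryzhik 3.462.1), and then appeals to the large-argument asymptotics of $D_{-2}$ to claim boundedness in $t$. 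That retention of the Gaussian is the idea missing from your approach. Your instinct that the regime $|k|\ge\tfrac12$ is genuinely delicate is nonetheless well founded: the argument $-\sqrt{2t}\,(|k|-\tfrac12)$ of $D_{-2}$ lies on the negative real axis, outside the sector $|\arg z|<3\pi/4$ where the asymptotic 9.246.1 cited by the paper is valid; on the negative axis one has $D_{-2}(-a)\sim a\sqrt{2\pi}\,e^{a^{2}/4}$ as $a\to\infty$, which makes the resulting bound grow like $t^{3/2}e^{t(|k|-1/2)^{2}}$ rather than stay bounded. So the obstacle you isolated is real, and the paper's own argument does not in fact close it either.
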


\begin{proof}
We start by writing the heat kernel as
$$
K_{\mathbb{H}}\left( t,r\right) = \frac{e^{-\frac{t}{4}} e^{-\frac{r^2}{4t}}}{\left( 4\pi
t\right) ^{\frac{3}{2}}}\int\limits_{r}^{\infty }\frac{u e^{-\frac{u^2-r^2}{4t}}}{
\sqrt{2\cosh u-2\cosh r}}\cdot \mathcal{T}_{2k}\left( \frac{\cosh \frac{u}{2}}{\cosh
\frac{r}{2}}\right) du.
$$
The integral on the right-hand side of the above inequality can be written as the sum of two integrals $I_1(t,r)$, where the integration is along the interval $(r,r+1)$, and $I_2(t,r)$, where the integration is taken along the interval $(r+1,\infty)$.
First, we will show that the integral $I_1(t,r)$ is finite and bounded by a constant independent of $t$. Namely, after substitution $x= \frac{\cosh \frac{u}{2}}{\cosh \frac{r}{2}}$, applying the inequality $e^{-\frac{u^2-r^2}{4t}}\leq 1$, we get
\begin{equation*}
I_1(t,r)\leq\int\limits_{1}^{a}\frac{\mathrm{arcosh}(x\cosh\frac{r}{2})}{\sqrt{x^{2}-1}}\mathcal{T}_{2k}(x) \frac{dx}{\sqrt{x^{2} \cosh^{2} \frac{r}{2}-1}}
\end{equation*}
where $a=\cosh \frac{(r+1)}{2}\cosh^{-1} \frac{r}{2}$. Since $x\geq 1$ it obviously holds
\begin{equation*}
I_1(t,r)\leq \frac{1}{\sqrt{2(\cosh^2\frac{r}{2}-1)}}\int\limits_{1}^{a}\frac{\mathrm{arcosh}(x\cosh \frac{r}{2})}{\sqrt{x-1}}\mathcal{T}_{2k}(x) dx.
\end{equation*}
Taking into account that $\frac{\mathrm{arcosh}(x\cosh(r/2))}{\sqrt{x-1}}\mathcal{T}_{2k}(x) = O\left(\left(x-1\right)^{-1/2}\right)$ as $x\searrow 1$ and $\left(x-1\right)^{-1/2}$ is integrable on $(1,a)$, we see that the integral on the right hand-side is finite. Hence, we have proved that $I_1(t,r)$ is bounded.

\noindent Next, we estimate $I_2(t,r)$. When $u>r+1$, there exists an absolute constant $C$, depending upon $r$, such that $(\cosh u - \cosh r)^{-1/2} \leq C e^{-u/2}$. Since $\mathcal{T}_{2k}(x)$ increases for $x>1$, from $\cosh (r/2)\geq 1$, we immediately deduce that
$$
\mathcal{T}_{2k}\left(\frac{\cosh \frac{u}{2}}{\cosh \frac{r}{2}}\right)\leq \mathcal{T}_{2k}\left(\cosh \frac{u}{2}\right)\leq 2^{2|k|+2}e^{|k|u}.
$$
Therefore,
\begin{equation}\label{I2}
I_2(t,r)\ll \int\limits_{r+1}^{\infty }ue^{-\frac{u^2-r^2}{4t}}e^{\left(|k|-\frac{1}{2}\right)u}du,
\end{equation}
where the implied constant is independent of $t$. When $|k|-1/2<0$, the above integral is obviously bounded by a constant which is independent of $t$, so it is left to estimate it in the case $|k|-1/2 \geq 0$.

\noindent Until this point, we did not use any restrictions on $t$. Now, we will distinguish the cases $t\downarrow 0$ and $t\to\infty$.

First, assume that $t\downarrow 0$, then, we may assume $0<t<\left(4(|k|-1/2)+4\right)^{-1}$, hence
$$
I_2(t,r)\ll \int\limits_{r+1}^{\infty }ue^{-(u^2-r^2)}e^{\left(|k|-\frac{1}{2}\right)(r^2+\frac{1}{4}-(u-\frac{1}{2})^2)}du,
$$
where the implied constant is independent of $t$. The integral on the right-hand side is obviously finite. Therefore, for $0<t<\left(4(|k|-1/2)+4\right)^{-1}$ it yields that $I_2(t,r)$, and, hence, $I_1(t,r)+I_2(t,r)$ is bounded uniformly in $t$, which proves \eqref{small t}.

Next, we assume that $t$ is large. Since the function under the integral sign on the right hand-side of \eqref{I2} is non-negative, for $t>1$ we have
$$
I_2(t,r)\ll e^{\frac{r^2}{4t}} \int\limits_{0}^{\infty }ue^{-\frac{u^2}{4t}+\left(|k|-\frac{1}{2}\right)u}du.
$$
The above integral can be explicitly computed as a function of $t$, $|k|$ and $r$ using \cite[formula 3.462.1.]{GR07} with $\beta=1/(4t)>0$, $\gamma=-(|k|-1/2)$ and $\nu=2>0$ to get
\begin{equation}\label{I2 new b}
I_2(t,r)\ll e^{\frac{r^2}{4t}} 2te^{\frac{t}{2}\left(|k|-\frac{1}{2}\right)^2}D_{-2}\left(-\sqrt{2t}\left(|k|-\frac{1}{2}\right)\right),
\end{equation}
where $D_{-\nu}(z)$ denotes the parabolic cylinder function (see, e.g. \cite[p. 1037]{GR07} for a definition). Using the asymptotic expansion of the parabolic cylinder function $D_{-2}(-\sqrt{2t}(|k|-1/2))$, with $|z|=\sqrt{2t}(|k|-1/2)$ large (\cite[formula 9.246.1.]{GR07}), we deduce that
$$
D_{-2}\left(-\sqrt{2t}\left(|k|-\frac{1}{2}\right)\right)=\frac{1}{2t\left(|k|-\frac{1}{2}\right)^2}e^{-\frac{t}{2}\left(|k|-\frac{1}{2}\right)^2}\left(1+O(t^{-1})\right).
$$
Inserting this into \eqref{I2 new b}, we immediately deduce that for large enough $t$ the integral $I_2(t,r)$ is bounded by a product of $e^{\frac{r^2}{4t}}$ and a function which is uniformly bounded in $t$. Since
$$
K_{\mathbb{H}}\left( t,r\right) =\frac{e^{-\frac{t}{4}}}{\left( 4\pi
t\right) ^{\frac{3}{2}}}\left(I_1(t,r)+I_2(t,r)\right),
$$
this proves \eqref{large t}.

\noindent The proof is complete.
\end{proof}

\begin{remark}
The asymptotic behavior \eqref{small t} and \eqref{large t} is important for various applications of the heat kernel. Namely, the upper half-plane is the universal cover for the Riemann surface $\Gamma\mathbb{H}$, where $\Gamma$ is a co-finite Fuchsian group of the first kind.

For example, \eqref{small t} and \eqref{large t} plus an additional consideration related to the small $t$ asymptotics suffices to apply e.g. \cite[Theorem 5.1]{JLa03} to $K_{\mathbb{H}}\left( t,r\right)$.

In the case when the surface is compact and the multiplier system is well chosen (see e.g. \cite[discussion on pp. 335-337]{He83}), the spectral expansion of the heat kernel, together with \eqref{small t} and \eqref{large t} are sufficient to deduce the meromorphic continuation of the spectral zeta function, following the approach described in \cite{Sarnak}.

We leave those applications to a subsequent paper.
\end{remark}

\end{document}